\newtheorem{theorem}{Theorem}
\newtheorem{claim}[theorem]{Claim}
\newtheorem{conjecture}[theorem]{Conjecture}
\newtheorem{corollary}[theorem]{Corollary}
\newtheorem{lemma}[theorem]{Lemma}
\newenvironment{proof}[1][Proof]{\noindent\textbf{#1.} }{\ \rule{0.5em}{0.5em}}
\title{Strong chromatic index of sparse graphs}
\author[1]{Micha\l\ D\k{e}bski\thanks{michal.debski87@gmail.com}}
\author[2,3]{Jaros\l aw Grytczuk\thanks{grytczuk@tcs.uj.edu.pl}}
\author[3]{Ma\l gorzata \'{S}leszy\'{n}ska-Nowak\thanks{m.sleszynska@mini.pw.edu.pl}}
\affil[1]{Faculty of Mathematics, Informatics and Mechanics, University of Warsaw, Warsaw, Poland}
\affil[2]{Faculty of Mathematics and Computer Science, Jagiellonian University, Kraków, Poland}
\affil[3]{Faculty of Mathematics and Information Science, Warsaw University of Technology, Warsaw, Poland}
\begin{document}

\maketitle

\begin{abstract}
A coloring of the edges of a graph $G$ is \emph{strong} if each color class
is an induced matching of $G$. The \emph{strong chromatic index} of $G$,
denoted by $\chi _{s}^{\prime }(G)$, is the least number of colors in a
strong edge coloring of $G$. In this note we prove that $\chi _{s}^{\prime
}(G)\leq (4k-1)\Delta (G)-k(2k+1)+1$ for every $k$-degenerate graph $G$.
This confirms the strong version of conjecture stated recently by Chang and
Narayanan \cite{ChangNarayanan}. Our approach allows also to improve the
upper bound from \cite{ChangNarayanan} for chordless graphs. We get that $%
\chi _{s}^{\prime }(G)\leq 4\Delta -3$ for any chordless graph $G$. Both
bounds remain valid for the list version of the strong edge coloring of
these graphs.
\end{abstract}

\section{Introduction}

A \emph{strong coloring} of a graph $G$ is an edge coloring in which every
color class is an \emph{induced matching}, that is, any two vertices that
belong to distinct edges of the same color are not adjacent. The \emph{%
strong chromatic index} of $G$, denoted by $\chi _{s}^{\prime }(G)$, is the
minimum number of colors in a strong edge coloring of $G$. This notion was
introduced by Erd\H{o}s and Ne\v{s}et\v{r}il around 1985, who were perhaps
inspired by a beautiful construction, due to Ruzsa and Szemer\'{e}di \cite%
{RuzsaSzemeredi}, of dense graphs decomposable into large induced matchings.

A basic conjecture formulated by Erd\H{o}s and Ne\v{s}et\v{r}il states that $%
\chi _{s}^{\prime }(G)\leq \frac{5}{4}\Delta ^{2}$ for every graph $G$ with
maximum degree $\Delta $. If true, this bound is sharp, as is seen on the
graph obtained from the cycle $C_{5}$ by blowing-up each vertex to an
independent set of size $\frac{\Delta }{2}$. For the general case, the
trivial upper bound on $\chi _{s}^{\prime }(G)$ is $2\Delta ^{2}-2\Delta +1$%
, as is seen easily by applying the greedy algorithm for any ordering of the
edges. Using the probabilistic method, Molloy and Reed \cite{MoloyReed}
improved the trivial bound to $(2-\epsilon )\Delta ^{2}$ for $\Delta $
sufficiently large, where $\epsilon $ is a small constant around $\frac{1}{50%
}$. They claim that this result can be slightly improved, but increasing $c$
above $\frac{1}{10}$ requires essentially new ideas. Another intriguing
conjecture is due to Faudree, Gy\'{a}rf\'{a}s, Schelp, and Tuza \cite%
{FaudreeEtAl}. It\ states that $\chi _{s}^{\prime }(G)\leq \Delta ^{2}$ when 
$G$ is a bipartite graph of maximum degree $\Delta $. Its stronger version
formulated by Brualdi and Quinn \cite{BrualdiQuinn} states that $\chi
_{s}^{\prime }(G)\leq \Delta _{1}\Delta _{2}$ if $G$ is a bipartite graph
and $\Delta _{i}$ is the maximum degree of a vertex in the $i$-th partition
class.

In this note we focus on graphs with bounded degeneracy. Recall that a graph 
$G$ is $k$\emph{-degenerate} if every subgraph of $G$ contains a vertex of
degree at most $k$. Chang and Narayanan \cite{ChangNarayanan} proved
recently that $\chi _{s}^{\prime }(G)\leq 10\Delta -10$ for a $2$-degenerate
graph $G$. They also conjectured that for any $k$-degenerate graph $G$ we
have a linear bound $\chi _{s}^{\prime }(G)\leq ck^{2}\Delta $, where $c$ is
an absolute constant. We confirm this conjecture in a stronger form, by
proving that inequality 
\[
\chi _{s}^{\prime }(G)\leq (4k-1)\Delta -k(2k+1)+1 
\]%
holds for every $k$-dgenerate graph $G$. Our proof is algorithmic: we
produce appropriate ordering of the edges of a graph $G$, and then apply the
greedy coloring algorithm. By the same technique, using a structural lemma
from \cite{ChangNarayanan}, we get an improved bound in a special case of 
\emph{chordless} graphs (graphs in which every cycle is induced). These
graphs form a proper subclass of $2$-degenerate graphs, and contain graphs
with cycle lengths divisible by four. It was conjectured in \cite%
{FaudreeEtAl} that for such graphs the strong chromatic index is linear in $%
\Delta $. We prove that every chordless graph $G$ of maximum degree $\Delta $
satisfies $\chi _{s}^{\prime }(G)\leq 4\Delta -3$.

\section{The results}

In our proof we will use the following simple lemma concerning local
structure of $k$-degenerate graphs. We say that a vertex $v$ is \emph{nice}
in a graph $G$ if $v$ has at least one neighbor of degree at most $k$, and
at most $k$ neighbors of degree strictly greater that $k$.

\begin{lemma}[{\protect\cite[Lemma 3]{ChangNarayanan}}]
\label{lemma_kdegenerateStructure} Every $k$-degenerate graph $G$ with at
least one edge contains a nice vertex.
\end{lemma}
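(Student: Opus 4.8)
The plan is to separate the vertices of $G$ by degree. Let $H = \{v \in V(G) : \deg_G(v) > k\}$ be the set of high-degree vertices and $L = V(G)\setminus H$ the set of vertices of degree at most $k$. The two defining requirements of a nice vertex then read: at least one neighbour in $L$, and at most $k$ neighbours in $H$.

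Suppose first that $H$ is empty, so every vertex of $G$ has degree at most $k$. Since $G$ has at least one edge, choose $v$ to be an endpoint of some edge. Then $v$ has a neighbour (necessarily of degree at most $k$), and since $H=\emptyset$ it has no neighbour of degree greater than $k$, hence at most $k$ of them. Thus $v$ is nice.

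Now suppose $H\neq\emptyset$. The subgraph $G[H]$ is a subgraph of $G$, so by $k$-degeneracy it contains a vertex $v$ with $\deg_{G[H]}(v)\le k$; equivalently, $v$ has at most $k$ neighbours of $G$ lying in $H$, which is the second property of niceness. For the first, note that $v\in H$ forces $\deg_G(v)\ge k+1$, whereas at most $k$ of the neighbours of $v$ lie in $H$; therefore $v$ has at least one neighbour in $L$, that is, a neighbour of degree at most $k$. Hence $v$ is nice and the lemma follows.

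The argument is elementary and I expect no real obstacle; the only points needing care are the boundary case $H=\emptyset$ (where the ``take a minimum-degree vertex of $G[H]$'' step would be vacuous and must be treated separately) and the trivial but essential observation that $G[H]$ inherits $k$-degeneracy from $G$ — indeed $G[H]$ is literally one of the subgraphs mentioned in the definition of $k$-degeneracy, so no separate justification is required.
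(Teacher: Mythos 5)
Your proof is correct and takes essentially the same approach as the paper: both apply $k$-degeneracy to the subgraph induced on the high-degree vertices to obtain a vertex with at most $k$ high-degree neighbours, which (having degree greater than $k$ in $G$) must then also have a low-degree neighbour. Your case split (on whether the high-degree set is empty) is marginally cleaner than the paper's (on whether the low-degree set is independent), but the underlying idea is identical.
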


\begin{proof}
Let $X\subseteq V(G)$ be the set of all vertices of degree at most $k$ in $G$%
. If there is an edge $e$ between any two vertices from $X$, then each of
the end points of $e$ is nice. Otherwise, when $X$ is an independent set,
consider the subgraphs $H$ induced by the set $V(G)\setminus X$ of all the
vertices whose degree in $G$ is strictly greater than $k$. Notice that $%
V(G)\setminus X$ is nonempty because $G$ has at least one edge. Let $v\in
V(G)\setminus X$ be any vertex whose degree in the subgraph $H$ is at most $%
k $ (which must exists, as $H$ is also $k$-dgenerate). We claim that $v$ is
nice in $G$. Indeed, it has at most $k$ neighbors in $V(G)\setminus X$, and
since it does not belong to $X$, it must have at least one neighbor in $X$.
\end{proof}

Our plan for the proof of the main result is simple: we will construct an
ordering of the edges of a $k$-degenerate graph $G$ such that for any edge $%
e $, the number of edges that are within distance one from $e$ and come
after $e$ in the order, is appropriately bounded. The desired coloring is
then obtained by coloring edges greedily in the reverse order. The ordering
will be constructed in a number of steps. Initially all edges are white. In
each step we will grey out a bunch of edges and add them to the ordering (at
the end). During the whole procedure we will keep under control the number
of white edges incident to grey edges, thereby obtaining a desired result.

\begin{theorem}
Every $k$-degenerate graph $G$ of maximum degree $\Delta $ satisfies%
\[
\chi _{s}^{\prime }(G)\leq (4k-1)\Delta -k(2k+1)+1. 
\]
\end{theorem}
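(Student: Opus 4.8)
Write $D=(4k-1)\Delta-k(2k+1)$, so that the claim reads $\chi _{s}^{\prime }(G)\le D+1$. Following the plan sketched above, I would produce a linear order of $E(G)$ in which every edge lies within distance one of at most $D$ edges that come \emph{after} it, and then color the edges greedily along the reverse of this order: when an edge $e$ is reached, the already colored edges conflicting with it are exactly those within distance one of $e$ that succeed it, so a color from $\{1,\dots ,D+1\}$ is available (and since greedy coloring respects lists, this also handles the list version). The order is built by a greying procedure. All edges start white; while white edges remain, pick a nice vertex $v$ of the current white graph, which exists by Lemma~\ref{lemma_kdegenerateStructure} since the white graph is a subgraph of $G$ and hence $k$-degenerate, and grey out, appending them to the end of the order, all white edges joining $v$ to those white-neighbors of $v$ whose white-degree is at most $k$. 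Since $v$ is nice it has at least one such neighbor, so each round greys a nonempty set and the procedure halts.

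The point of greying exactly this set is the following invariant, which must be tracked: once an edge $xy$ becomes grey, both $x$ and $y$ have white-degree at most $k$ from then on. Indeed, the ``low'' endpoint of a greyed edge had white-degree at most $k$ at the instant it was greyed, and white-degrees only decrease; and after the round at $v$, the vertex $v$ retains white edges only to its at most $k$ neighbors of white-degree exceeding $k$. This is what keeps the second neighborhood of an edge — the usual source of a quadratic term in strong edge coloring — under control. Now consider an edge $e=v\ell$ greyed during the round at $v$. An edge that succeeds $e$ in the order and is within distance one of $e$ is either another edge of the same round placed after $e$ (all such pass through $v$), or an edge that is still white at the end of the round; in the latter case it is incident to $v$ (only the at most $k$ edges from $v$ to its high white-degree neighbors survive), to $\ell$ (at most $k-1$ white edges), to a $G$-neighbor of $v$, or to a $G$-neighbor of $\ell$. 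A $G$-neighbor $x$ of $v$ with $vx$ already grey carries at most $k$ white edges by the invariant; each of the at most $k$ high white-degree neighbors of $v$ carries at most $\Delta$; the low white-degree neighbors of $v$ carry at most $k-1$ after the round; and the $G$-neighbors of $\ell$ are bounded the same way, only at most $k$ of them still being white-neighbors of $\ell$. Summing these contributions — discarding edges counted twice because they meet two of the relevant neighborhoods, and using that $v$ has at most $k$ neighbors of high white-degree — should give a total of at most $D$.

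The genuinely delicate part is precisely this bookkeeping, and that is where I expect the real work to lie. The batch at each round has to be chosen exactly right — the edges from the nice vertex to its low white-degree neighbors, and not the few edges to high white-degree neighbors — so that every freshly greyed edge automatically has a ``low-degree on the grey side'' second neighborhood; and the surviving linear terms must then be summed carefully enough, exploiting that processed or low-degree vertices contribute only $O(k)$ rather than $O(\Delta)$ white edges, that high white-degree vertices number at most $k$, and that several edges are double-counted, so that the coefficient of $\Delta$ comes out to exactly $4k-1$ and the additive term to $-k(2k+1)$, rather than the looser bound that a naive summation yields. With that count established, the reverse greedy coloring completes the proof, and the list version follows verbatim.
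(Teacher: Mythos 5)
Your plan is the paper's proof: the same greying procedure driven by Lemma~\ref{lemma_kdegenerateStructure}, the same batch $Y_i$ of edges from the nice vertex to its low white-degree neighbours, the same invariant that every endpoint of a grey edge keeps white degree at most $k$, and the same reverse greedy colouring (hence the same list version). The one genuine shortfall is that the decisive quantitative step is never carried out: you stop at ``should give a total of at most $D$'' and predict that hitting the exact constants requires a delicate discarding of doubly counted edges. It does not; and, as literally written, your accounting would in fact miss the stated bound, because at one point you allow $\ell$ to retain ``at most $k$'' white neighbours. Summing with $k$ there gives only $4k\Delta-2k(k+1)$, which is weaker than the claimed bound by $\Delta-k$. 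The correct figure is $k-1$ (consistent with your earlier parenthetical): at the moment $e=v\ell$ is greyed we have $\deg_{H_i}(\ell)\le k$ \emph{including} $e$, so at most $k-1$ white edges other than $e$ leave $\ell$.

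With that fixed, the count closes by a plain addition of the two endpoint contributions, with no double-counting correction at all. Count, as the paper does, all white edges (edges outside $X_i$) within distance one of $e$ just before the round; this automatically covers the same-batch edges appended after $e$, which your ``end of round'' bookkeeping has to add back separately. If $v$ is incident to $a$ grey edges, to $b$ white neighbours of white degree at most $k$ other than $\ell$, and to $c$ white neighbours of white degree exceeding $k$, then the white edges seen from $v$ number at most $ak+bk+c\Delta=(a+b+c)k+c(\Delta-k)\le(\Delta-1)k+k(\Delta-k)$, using $a+b+c\le\Delta-1$ and $c\le k$ (niceness of $v$). If $\ell$ is incident to $p\le k-1$ white and $q$ grey edges other than $e$, the white edges seen from $\ell$ number at most $p\Delta+qk=p(\Delta-k)+(p+q)k\le(k-1)(\Delta-k)+(\Delta-1)k$, using $p+q\le\Delta-1$. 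The two bounds sum to exactly $(4k-1)\Delta-k(2k+1)$, and one further colour for $e$ itself yields the theorem.
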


\begin{proof}
We will iteratively construct an (ordered) list of edges of $G$, starting
with $L_{0}$ being an empty list. We will find an increasing chain of lists $%
L_{1},L_{2},\ldots $ such that $L_{i}$ is obtained from $L_{i-1}$ by
appending a (nonzero) number of new edges. For some $s$, the last list $%
L_{s} $ will contain all edges of $G$, and it will be the desired ordering
of the set $E(G)$.

Suppose that $L_{i}$, with $i>0$, is defined, and let $X_{i}\subset E(G)$ be
a set of edges that appear in $L_{i}$. We may imagine the edges from $X_{i}$
to be greyed out, while the rest of edges being still white. Let $%
H_{i}=(V(G),E(G)\setminus X_{i})$, and let $v_{i}$ be a nice vertex in $%
H_{i} $. By Lemma \ref{lemma_kdegenerateStructure}, such a vertex must
exist. Now, let $Y_{i}$ be the set of white edges incident with $v_{i}$ and
a vertex of degree at most $k$, that is,%
\[
Y_{i}=\{v_{i}w\in E(H_{i}):\deg _{H_{i}}(w)\leq k\}. 
\]%
Take $L_{i+1}$ to be the list obtained from $L_{i}$ by appending edges from $%
Y_{i}$ in any order. Clearly, $Y_{i}$ is nonempty, so for some $s$, the list 
$L_{s}$ will contain all edges of $G$. In order to prove the desired
property of $L_{s}$ we will keep the following invariant: the number of
white edges incident to each end point of a grey edge is at most $k$. More
formally, we shall prove the following claim.

\begin{claim}
For every $i=0,1,\ldots ,s$, and every vertex $v\in V(G)$, if $v$ is
incident to at least one edge from $X_{i}$, then it is incident to at most $%
k $ edges outside $X_{i}$.
\end{claim}

To prove the claim we use induction on $i$. For $i=0$ we have $%
X_{i}=\emptyset $, so there is nothing to prove. Now, suppose that the
invariant holds for some $i$ (where $0\leq i<s$), and consider some vertex $%
v $ incident with at least one edge from $X_{i+1}$. Note that if $v$ is
incident to an edge from $X_{i}$, then the statement follows from induction
hypothesis, as $X_{i}\subset X_{i+1}$. In the remaining case, $v$ is
incident to an edge from $Y_{i}$. So, either $v\neq v_{i}$ and $\deg
_{H_{i}}(v)\leq k$ by definition of $H_{i}$, or $v=v_{i}$ and, as a nice
vertex in $H_{i}$, it is incident to at most $k$ edges outside $X_{i}\cup
Y_{i}$. This finishes proof of the Claim.

Now, for any edge $e\in E(G)$, we will count the number of edges that are
within distance one from $e$ and appear on $L_{s}$ later than $e$. Let $i$
be the unique number for which $e\in Y_{i}$, and let $e=v_{i}w$, with $v_{i}$
being nice in $H_{i}$. We will estimate the number of white edges (outside $%
X_{i}$) that are within distance one from $e$.

We will say that a vertex $v$ \emph{sees} an edge $e$ if $v$ is incident to $%
e$, or it is a neighbor of a vertex incident to $e$. First we count white
edges that are seen from vertex $v_{i}$ (except $e$ itself). Let $a$ be the
number of grey edges incident to $v_{i}$. By the Claim, there are at most $%
ak $ white edges in total, incident to the end points of such edges (other
than $v_{i}$). Next, let $b$ denote the number of neighbors of $v_{i}$ in $%
H_{i}$ whose white degree is at most $k$. So, we have another $bk$ white
edges seen from $v_{i}$. Finally, let $c$ be the number of neighbors of $%
v_{i}$ in $H_{i}$ whose white degree is strictly larger than $k$. Thus we
have at most $ak+bk+c\Delta $ white edges in total that can be seen from $%
v_{i}$. But $a+b+c\leq \Delta -1$ and $c\leq k$ (since $v_{i}$ is nice in $%
H_{i}$). So, we get that%
\[
ak+bk+c\Delta =(a+b+c)k+c(\Delta -k)\leq (\Delta -1)k+k(\Delta -k). 
\]%
Now, we count similarly the number of white edges seen from the vertex $w$.
Let $p$ and $q$ be the number of white and grey edges (different from $e$)
incident to $w$, respectively. Arguing as before we get at most $p\Delta +qk$
white edges seen from $w$. Since $\deg _{H_{i}}(w)\leq k$, we have $p\leq
k-1 $. Also, $p+q\leq \Delta -1$. Hence%
\[
p\Delta +qk=p(\Delta -k)+(p+q)k\leq (k-1)(\Delta -k)+(\Delta -1)k. 
\]%
Putting these two estimates together, we obtain that the total number of
white edges seen from the end points of $e$ is at most%
\[
(4k-1)\Delta -k(2k+1). 
\]%
To finish the proof of the theorem, notice only that we need one more color
to guarantee success of the greedy coloring procedure with respect to the
linear ordering of the edges we have produced.
\end{proof}

As every chordless graph $G$ is $2$-degenerate, our theorem implies that $%
\chi _{s}^{\prime }(G)\leq 7\Delta -9$. However, one may improve it by using
the following lemma from \cite{ChangNarayanan}.

\begin{lemma}[{\protect\cite[Lemma 7]{ChangNarayanan}}]
\label{lemma_chordlessStructure} Every chordless graph $G$ contains a nice
vertex with at most one neighbor of degree strictly greater than two.
\end{lemma}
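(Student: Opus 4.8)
The plan is to run the argument behind Lemma~\ref{lemma_kdegenerateStructure} with $k=2$ and to squeeze the stronger conclusion out of chordlessness. Call a vertex \emph{small} if its degree in $G$ is at most $2$ and \emph{big} otherwise, and let $S,B$ be the sets of small and big vertices. Throughout we use that every subgraph of a chordless graph is again chordless, so passing to induced subgraphs stays in the class; and we may assume $G$ has an edge.

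First the easy case: if some edge $uv$ has both endpoints in $S$, then $u$ has the small neighbour $v$ and, as $\deg_G(u)\le 2$, at most one other neighbour, hence at most one big neighbour, so $u$ is the desired (automatically nice) vertex. Assume then that $S$ is independent, so that every edge of $G$ meets $B$, and put $H=G[B]$, which is chordless. If $H$ has a vertex $v$ with $\deg_H(v)\le 1$ we are done: since $\deg_G(v)\ge 3$, this $v$ has at least two neighbours in $S$ (so it is nice) and at most one neighbour in $B$ (so at most one big neighbour). This also settles $E(H)=\emptyset$, and reduces the task to showing that $H$ cannot have minimum degree at least $2$.

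Ruling out $\delta(H)\ge 2$ is the crux, and the place where chordlessness of $G$ itself --- not merely $2$-degeneracy of $H$ --- has to be used. Assuming $\delta(H)\ge 2$, take a shortest cycle $C=x_1x_2\cdots x_\ell$ of $H$; as a cycle of the chordless graph $G$ it is induced in $G$, so no two non-consecutive $x_i$ are adjacent. Each $x_i$ is big, so apart from its two cycle-neighbours it has further neighbours, possibly in $B$ and possibly in $S$. I would trace these off-cycle neighbours: any short path through $V(G)\setminus V(C)$ that returns to $C$ yields a chord of $C$ or a cycle shorter than $C$, a contradiction; together with the independence of $S$ this should force, at some cycle vertex, all off-cycle neighbours to lie in $S$ --- giving a vertex of $H$-degree at most $2$ whose remaining structure can then be pushed to a contradiction --- or else directly produce the wanted vertex. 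A natural entry point is to start from a vertex of small $H$-degree, which exists by Lemma~\ref{lemma_kdegenerateStructure} applied to $H$.

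I expect the main obstacle to be exactly this last step. The subtlety is that $2$-degeneracy of $H$ only hands over a vertex of $H$-degree $2$, which is too weak; one genuinely needs that all of $G$ is chordless, i.e.\ that the private small neighbours of the cycle vertices cannot be re-wired back to the cycle or to one another without creating a chorded cycle. Enumerating these attachment patterns and verifying that none of them coexists with every cycle vertex having two neighbours in $B$ is the part requiring careful bookkeeping.
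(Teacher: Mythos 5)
Your two easy cases are fine: an edge inside $S$ immediately yields the desired vertex, and a vertex of $H$-degree at most $1$ works because it has at least two neighbours in $S$ and at most one in $B$. The problem is that everything then hinges on the claim that $H=G[B]$ cannot have minimum degree at least $2$, and you never prove this---you only describe a plan (``trace these off-cycle neighbours'', ``requiring careful bookkeeping'') and yourself flag it as the main obstacle. As written, the proposal is a reduction plus an unproved crux, not a proof. (The paper offers no proof to compare against: the lemma is quoted from \cite{ChangNarayanan}.)

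More importantly, the claim you reduce to is false, so no amount of bookkeeping will close the gap. Take any cycle $C_n$ and attach one pendant vertex to each cycle vertex. The only cycle of this graph is $C_n$ itself and it is induced, so the graph is chordless; the vertices of degree at most $2$ are exactly the pendants, which form an independent set; yet $G[B]=C_n$ has minimum degree $2$. So the configuration you are trying to exclude genuinely occurs, and your shortest-cycle argument cannot reach a contradiction because there is none: each cycle vertex has a perfectly legitimate off-cycle neighbour in $S$. In fact this example does more damage than breaking your reduction: in it each pendant has no neighbour of degree at most $2$ (hence is not nice), while each cycle vertex has two neighbours of degree $3$, so \emph{no} vertex satisfies the lemma's conclusion under the paper's definition of nice. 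This shows the statement as transcribed here is missing a hypothesis from the original source (most plausibly a minimum-degree condition, with low-degree vertices handled separately in the colouring argument). Any correct proof must invoke that extra hypothesis precisely at the step where you got stuck; before attempting to finish, you need to pin down the exact statement being proved.
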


Indeed, the lemma implies that in the process of constructing lists $L_{i}$
we can keep the stronger invariant, namely, that the number of white edges
incident to each end point of a grey edge is at most one. By a similar
counting one can obtain the following theorem (a detailed proof is omitted).

\begin{theorem}
Every chordless graph $G$ of maximum degree $\Delta $ satisfies 
\[
\chi _{s}^{\prime }(G)\leq 4\Delta -3. 
\]
\end{theorem}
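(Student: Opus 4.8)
The plan is to mimic the proof of the first theorem, but exploiting the stronger structural fact from Lemma~\ref{lemma_chordlessStructure}: in a chordless graph we can always find a nice vertex having \emph{at most one} neighbor of degree strictly greater than two. Following the exact same scheme, we build an increasing chain of lists $L_0\subset L_1\subset\cdots\subset L_s$ of edges, where at step $i$ we pick such a special nice vertex $v_i$ in $H_i=(V(G),E(G)\setminus X_i)$, append to the list the set $Y_i=\{v_iw\in E(H_i):\deg_{H_i}(w)\le 2\}$ of white edges from $v_i$ to low-degree vertices, and then color greedily in the reverse order at the end. Since every chordless graph is $2$-degenerate, Lemma~\ref{lemma_kdegenerateStructure} (with $k=2$) guarantees a nice vertex exists; the point is that the refined lemma lets us insist the nice vertex has $c\le 1$ rather than $c\le k=2$.

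The key quantitative improvement comes from strengthening the Claim. First I would prove, by the same induction on $i$, the stronger invariant: for every $i$ and every $v\in V(G)$, if $v$ is incident to at least one grey edge (edge of $X_i$), then $v$ is incident to at most \emph{one} white edge (edge outside $X_i$). The inductive step is identical in structure: if $v$ already touched a grey edge at stage $i$ we are done by the hypothesis since $X_i\subset X_{i+1}$; otherwise $v$ touches an edge of $Y_i$, so either $v\neq v_i$ and $\deg_{H_i}(v)\le 2$ — so after removing the edge $v_iv\in Y_i$ at most one white edge remains at $v$ — or $v=v_i$, and because $v_i$ is the special nice vertex it has at most one neighbor in $H_i$ of degree $>2$, hence after greying out all of $Y_i$ at most one white edge survives at $v_i$. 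This is where the refined lemma is essential: with the generic nice vertex we could only bound this by $k$.

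Next I would redo the counting bounding, for a fixed edge $e=v_iw\in Y_i$, the number of white edges within distance one of $e$ (these are exactly the later edges on $L_s$ that can conflict with $e$ in the strong coloring). Counting white edges seen from $v_i$: with $a$ grey edges at $v_i$, the invariant gives at most $a$ further white edges at their other endpoints; with $b$ neighbors of white degree $\le 2$ we pick up at most $b$ more white edges (one per such neighbor, besides the edge to $v_i$); with $c\le 1$ neighbors of larger white degree we pick up at most $c\Delta\le\Delta$. Since $a+b+c\le\Delta-1$, this totals at most $(a+b+c)+c(\Delta-1)\le(\Delta-1)+(\Delta-1)=2\Delta-2$. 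Counting from $w$: since $\deg_{H_i}(w)\le2$ there is at most $p\le1$ white edge besides $e$ at $w$, contributing $\le\Delta$ further white edges through it, plus at most $q$ white edges at the other endpoints of grey edges at $w$, with $p+q\le\Delta-1$, giving at most $\Delta+(\Delta-2)=2\Delta-2$; but one must be slightly careful that edges incident to $v_i$ are not double-counted, which only decreases the total. Adding the two contributions yields at most $4\Delta-4$ white edges within distance one of $e$ and appearing later, so greedy coloring in reverse order succeeds with one more color, giving $\chi_s'(G)\le 4\Delta-3$.

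The main obstacle is purely bookkeeping: making the counting from $w$ genuinely tight and avoiding off-by-one errors from edges counted both at $v_i$ and at $w$, and from the edge $e$ itself. The conceptual content is entirely carried by the strengthened invariant, which in turn rests on Lemma~\ref{lemma_chordlessStructure}; once that is in place the arithmetic collapses to the stated bound, which is why the authors are comfortable omitting the detailed computation.
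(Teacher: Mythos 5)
Your overall strategy is the paper's: invoke Lemma~\ref{lemma_chordlessStructure} to keep the stronger invariant (every endpoint of a grey edge is incident to at most one white edge) and redo the count. The invariant and its inductive proof are fine. The gap is in the count from $v_i$, where you say the $b$ low-white-degree neighbors of $v_i$ contribute ``at most $b$ more white edges, one per such neighbor, besides the edge to $v_i$''. But the edges $v_iu$ to those neighbors are precisely the edges of $Y_i\setminus\{e\}$: up to $|Y_i|-1=b$ of them appear \emph{after} $e$ on $L_s$, each shares the vertex $v_i$ with $e$, and each blocks a color when $e$ is colored greedily, so they must be counted (the paper's general proof counts them inside the term $bk$). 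Restoring them makes the $v_i$-side $a+2b+c\Delta$, which in the worst case ($a=0$, $c=1$, $b=\Delta-2$) is $3\Delta-4$, not $2\Delta-2$; together with your $w$-side bound of $2\Delta-2$ this yields only $5\Delta-6$ conflicting edges, i.e.\ $\chi_s'(G)\le 5\Delta-5$.

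To actually reach $4\Delta-4$ you must also apply the strengthened invariant \emph{at $w$ and at $v_i$ themselves}, not only at the far endpoints of grey edges: a vertex incident to a grey edge has at most one white edge in total. Hence if $q\ge1$ then the only white edge at $w$ is $e$, so $p=0$ and the $w$-side is at most $q\le\Delta-1$; if $q=0$ it is at most $p\Delta\le\Delta$. Either way the $w$-side is at most $\Delta$, not $2\Delta-2$. Likewise, if $a\ge1$ then $v_i$ has only one white edge, so $Y_i=\{e\}$ and $b=c=0$, giving at most $a\le\Delta-1$ from $v_i$; if $a=0$ the $v_i$-side is at most $2b+c\Delta\le3\Delta-4$. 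In every case the total is at most $(3\Delta-4)+\Delta=4\Delta-4$, whence $4\Delta-3$ colors suffice. So your final number is right, but the two side-bounds you state are respectively false and too weak, and the missing ingredient is this dichotomy forced by the invariant.
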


An important subclass of $k$-dgenerate graphs are graphs of treewidth at
most $k$. For these graphs a much better bound for the strong chromatic
index can be obtained using a different argument. The following result was
(implicitly) proved in \cite{FaudreeEtAl2}, and we repeat its nice proof
here for completeness.

\begin{theorem}
Let $\mathcal{G}$ be a proper minor-closed class of graphs. Let $r=r(%
\mathcal{G})$ be the maximum chromatic number of a member of $\mathcal{G}$.
Then every graph $G\in \mathcal{G}$ of maximum degree $\Delta $ satisfies%
\[
\chi _{s}^{\prime }(G)\leq r(\Delta +1). 
\]
\end{theorem}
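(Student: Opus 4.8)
The plan is to combine two classical facts: that a graph $G$ and its square-like structure on edges can be controlled by the chromatic number of a related minor, and that taking the line graph together with ``square'' adjacency does not leave a minor-closed class by much. More precisely, for a graph $G$ let $L(G)^{(2)}$ denote the graph whose vertices are the edges of $G$, with two edges adjacent if they lie within distance one in $G$ (i.e. they share an endpoint or are joined by an edge). A strong edge coloring of $G$ is exactly a proper vertex coloring of $L(G)^{(2)}$, so $\chi_s'(G) = \chi\bigl(L(G)^{(2)}\bigr)$. The whole task therefore reduces to bounding $\chi\bigl(L(G)^{(2)}\bigr)$ in terms of $\Delta$ and $r$.

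First I would show that $L(G)^{(2)}$ is itself a minor of a graph in $\mathcal{G}$, or at least has bounded chromatic number via a greedy/degeneracy-type argument fed by the minor-closed hypothesis. The cleaner route that I expect \cite{FaudreeEtAl2} uses: root the argument in the observation that if $\mathcal{G}$ is minor-closed and every member is $r$-colorable, then every member is in fact $(r-1)$-degenerate (since a minimal non-$(r-1)$-degenerate graph would be $r$-connected enough to force a bad minor — more carefully, a proper minor-closed class has bounded average degree, hence bounded degeneracy, and the degeneracy bound is at most $r-1$ because otherwise one extracts a subgraph needing $\ge r+1$ colors). Actually the precise statement one wants is: every $G \in \mathcal{G}$ has an ordering $v_1, \dots, v_n$ of its vertices in which each $v_j$ has at most $r-1$ neighbors among $\{v_1,\dots,v_{j-1}\}$. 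I would then order the \emph{edges} of $G$ by, say, the larger endpoint (breaking ties arbitrarily), and color greedily in reverse.

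The key counting step: fix an edge $e = v_iv_j$ with $i < j$, so $v_j$ is the ``late'' endpoint. Edges within distance one of $e$ that come later in the order are edges incident to a vertex appearing after $v_i$; I would split these into (a) edges incident to $v_j$ other than $e$, of which there are at most $\Delta - 1$, and (b) edges incident to a neighbor $u$ of $v_i$ or $v_j$ with $u$ later than $v_i$ in the ordering. Here the degeneracy ordering does the work: $v_i$ has at most $r-1$ neighbors that come later than it, and similarly around $v_j$, so the number of relevant ``back-neighbors'' contributing is bounded by roughly $r$, each contributing at most $\Delta$ edges — this is where the factor $r(\Delta+1)$ materializes. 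I would set up the bookkeeping so that the total count of later same-distance-class edges incident through both endpoints is at most $r(\Delta+1) - 1$, and then greedy coloring in reverse order succeeds with $r(\Delta+1)$ colors.

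The main obstacle I anticipate is getting the degeneracy bound $r-1$ exactly right, and making the edge-ordering argument tight enough to land on $r(\Delta+1)$ rather than something like $2r\Delta$. The naive bound ``each edge sees at most $2(\Delta-1) + 2(r-1)(\Delta-1)$ later edges'' is far too weak; one has to be careful to use the degeneracy ordering on \emph{both} endpoints and avoid double-counting the edges incident to the late endpoint $v_j$ versus the back-neighbors of $v_i$. The slick version presumably orders edges cleverly — perhaps lexicographically by $(\min, \max)$ endpoint indices — so that from the perspective of $e = v_iv_j$, the only later edges within distance one are those incident to $v_j$ (at most $\Delta - 1$ of them) together with those incident to one of the at most $r-1$ later-neighbors of $v_i$ \emph{and} the at most $r-1$ later-neighbors of $v_j$, with a careful argument that edges incident to $v_i$ itself with a small other endpoint are already colored. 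Pinning down that this yields exactly $(r-1)\Delta + (\Delta - 1) + \dots = r(\Delta+1) - 1$ later neighbors is the delicate part; once the ordering is chosen correctly the rest is a routine greedy argument.
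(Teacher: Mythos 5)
Your reduction to properly coloring the graph $L(G)^{(2)}$ (edges of $G$ adjacent when at distance at most one) is the right reformulation, but the engine you propose to drive it fails at its first step. The claim that a minor-closed class in which every member is $r$-colorable must consist of $(r-1)$-degenerate graphs is false: for planar graphs $r=4$ by the Four Color Theorem, yet the icosahedron is $5$-regular, so the class of planar graphs is not even $4$-degenerate, let alone $3$-degenerate. Your justification inverts the degeneracy--chromatic-number inequality: degeneracy $d$ gives $\chi \leq d+1$, but large degeneracy does not force large chromatic number (bipartite graphs can have arbitrarily large degeneracy), so ``degeneracy $\geq r$'' does not ``extract a subgraph needing $\geq r+1$ colors.'' Even if you replace $r-1$ by the true degeneracy $d$ of the class, the greedy count around an edge $e=v_iv_j$ is inherently of order $d\Delta + \Delta$ with $d$ unrelated to $r$, and you concede yourself that you cannot see how to tighten the bookkeeping to $r(\Delta+1)$. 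So the proposal has a genuine gap on both counts: the structural lemma it relies on is false, and the counting would not reach the claimed constant even if it were true.

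The paper's argument is entirely different and sidesteps degeneracy altogether. Start from a proper edge coloring of $G$ with $\Delta+1$ colors (Vizing), so $E(G)$ is partitioned into $\Delta+1$ matchings. For a fixed matching $M$, take the subgraph of $G$ induced by the endpoints of $M$ and contract each edge of $M$; the result $G_M$ is a minor of $G$, hence lies in $\mathcal{G}$ and admits a proper vertex coloring with $r$ colors. Two edges of $M$ receiving different colors in $G_M$ have no edge of $G$ between their endpoints, so each color class of $G_M$ is an induced matching of $G$. Refining each of the $\Delta+1$ matchings into $r$ shades this way gives a strong edge coloring with $r(\Delta+1)$ colors. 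The factor $r$ enters through the chromatic number of a contracted minor, not through any degeneracy ordering --- that is the idea missing from your attempt.
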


\begin{proof}
Suppose we have $\Delta +1$ basic colors, each having $r$ different shades.
Start with the usual proper edge coloring of $G$ using at most $\Delta +1$
basic colors. Let $M$ be a fixed color class (which is a usual matching in $%
G $). Consider a graph $G_{M}$ whose vertex set is $M$, with two edges $%
e,f\in M$ adjacent whenever at least one pair of their corresponding end
points is joined by an edge in $G$. In other words, $G_{M}$ is obtained by
taking a subgraph of $G$ induced by the end points of all edges in $M$, and
then their contraction. Clearly, $G_{M}$ belongs to $\mathcal{G}$, so it has
a proper vertex coloring using at most $r$ colors. It follows that we may
color the edges of $M$ with $r$ shades of the basic color of $M$ so that
each shade forms now an induced matching in $G$. Repeating this operation
for every matching of the initial edge coloring of $G$ we get the desired
strong coloring using at most $r(\Delta +1)$ colors.
\end{proof}

Since graphs of treewidth at most $k$ are $(k+1)$-colorable we get the
following result.

\begin{corollary}
Every graph of treewidth at most $k$ satisfies%
\[
\chi _{s}^{\prime }(G)\leq (k+1)(\Delta +1). 
\]
\end{corollary}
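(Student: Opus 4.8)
The plan is to invoke the previous theorem with $\mathcal{G}=\mathcal{G}_k$, the class of all graphs of treewidth at most $k$, so the whole task reduces to two routine verifications: that $\mathcal{G}_k$ is a proper minor-closed class, and that $r(\mathcal{G}_k)\leq k+1$.

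First I would recall that treewidth does not increase under edge contraction, edge deletion, or vertex deletion: given a tree decomposition of $G$ of width $k$, deleting a vertex $v$ from every bag still yields a valid decomposition of $G-v$, and contracting an edge $uv$ can be handled by replacing $u$ (and $v$) by the contracted vertex in every bag, again without enlarging any bag. Hence $\mathcal{G}_k$ is closed under minors; it is proper because, e.g., $K_{k+2}$ has treewidth $k+1$ and so does not lie in $\mathcal{G}_k$.

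Next I would bound the chromatic number of members of $\mathcal{G}_k$. A graph of treewidth at most $k$ is $k$-degenerate (take a bag-leaf of a tree decomposition and peel off a vertex appearing in no other bag; its degree is at most $k$, and the property is hereditary), so a greedy coloring in the reverse of a degeneracy ordering uses at most $k+1$ colors. Therefore $r(\mathcal{G}_k)\leq k+1$. (In fact $K_{k+1}\in\mathcal{G}_k$ shows $r(\mathcal{G}_k)=k+1$, but only the upper bound is needed.)

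Finally, applying the preceding theorem to $\mathcal{G}=\mathcal{G}_k$ and any $G\in\mathcal{G}_k$ of maximum degree $\Delta$ gives $\chi_s'(G)\leq r(\mathcal{G}_k)(\Delta+1)\leq (k+1)(\Delta+1)$, as claimed. There is no real obstacle here; the only point requiring a moment's care is the stability of treewidth under contraction, since deletions are immediate but a contracted vertex must be inserted into every bag that previously contained either endpoint, and one should check this does not raise the width.
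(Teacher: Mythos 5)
Your proposal is correct and follows the same route as the paper: the corollary is obtained by applying the preceding theorem to the minor-closed class of graphs of treewidth at most $k$, using that such graphs are $(k+1)$-colorable. The paper states this in one line, while you supply the (standard, correct) verifications that treewidth is minor-monotone and that treewidth at most $k$ implies $k$-degeneracy, hence $(k+1)$-colorability.
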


Notice that all upper bounds obtained by greedy colorings remain valid for
the natural list version of the strong chromatic index.

\section{Final remarks}

Our main theorem asserts, that the strong chromatic index of graphs of
bounded degeneracy is at most linear in their maximum degree, which is best
possible up to a multiplicative constant. This covers all natural classes of
graphs that have bounded \emph{average} degree, such as graphs of bounded
treewidth, graphs of bounded arboricity, planar graphs (where natural means
closed under taking subgraphs). On the other hand, when average degree is a
function $d(n)$ of the number of vertices in a graph, our result implies a
bound on $\chi _{s}^{\prime }(G)$ that is not better than $cd(n)^{2}$ for
some constant $c$.

It is natural to wonder whether our bound is optimal in some sense, in terms
of average degree. Namely, does there exist a family of graphs of average
degree $d(n)$, greater than a constant, that have strong chromatic index
bounded by a linear function of $d(n)$? Note that it is in fact a question
concerning a nontrivial lower bound on $\chi _{s}^{\prime }(G)$ that depends
only on the average degree. As we suspect that such a bound may not exist,
we conjecture the following.

\begin{conjecture}
\label{conjecture_bezograniczen} There exists a function $d(n)$ that goes to
infinity such that for any constant $c$ there exists a graph on $n$ vertices
of average degree $d(n)$, that have strong chromatic index at most $cd(n)$.
\end{conjecture}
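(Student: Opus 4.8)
The plan is to exhibit, for a suitable slowly growing target, a family of graphs realizing average degree $\bar d(n)\to\infty$ whose edge set decomposes into $O(\bar d(n))$ induced matchings; since such a decomposition is exactly a strong edge coloring, it yields $\chi_s'(G)=O(\bar d(n))$, which is all the conjecture asks for. The governing observation is that the target is extremely tight from below. Every graph satisfies $\chi_s'(G)\ge\Delta(G)\ge\bar d(G)$, and moreover if some edge $uv$ has both endpoints of degree $\Delta$, then the $2\Delta-1$ edges incident to $u$ or $v$ pairwise conflict, so $\chi_s'(G)\ge 2\Delta(G)-1$. Consequently any graph with average degree $\bar d(n)$ and $\chi_s'=O(\bar d(n))$ must be nearly regular, $\Delta=O(\bar d(n))$, and, by counting color classes, must admit a strong coloring whose typical class is an induced matching covering a positive fraction of the vertices. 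So the whole problem reduces to constructing dense, nearly regular graphs whose edges split into a number of \emph{large} (linear-sized) induced matchings proportional to the degree.

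First I would isolate the reduction to one clean family. It suffices to produce a sequence $H_1,H_2,\dots$ with $\Delta(H_t)\to\infty$, with each $H_t$ nearly regular, and with $\chi_s'(H_t)\le C\,\Delta(H_t)$ for an absolute constant $C$. Given such a family, one defines the required $n$-vertex graphs by taking, for each $n$, a disjoint union of copies of the largest $H_t$ that fits, plus a negligible number of isolated vertices to reach exactly $n$. Disjoint unions leave $\chi_s'$ equal to the maximum over components and essentially preserve the average degree, so the resulting graphs have average degree $d(n):=(1-o(1))\bar d(H_t)\to\infty$ and $\chi_s'\le C\,d(n)$. This plumbing is routine; the content is entirely in the family $\{H_t\}$.

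For the construction itself I would look to dense graphs that decompose into induced matchings, i.e.\ the Ruzsa--Szemer\'edi phenomenon alluded to in the introduction. Concretely, starting from a Sidon set $S$ in a cyclic group of odd order and forming the bipartite circulant graph with $x\sim y$ iff $y-x\in S$, one obtains a $|S|$-regular graph of girth at least six in which each class $\{(x,y):x+y=t\}$ is a genuine induced matching: for two of its edges, with $y-x=s$ and $y'-x'=s'$, the only possible chord would force $(s+s')/2\in S$, which the Sidon property rules out whenever $s\neq s'$. This realizes average degree $\to\infty$ together with a strong coloring, so it is the natural candidate to analyze and then to optimize.

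The hard part, and the step I expect to be the main obstacle, is driving the \emph{number} of induced matchings down to $O(\Delta)$, equivalently making the matchings large. In these algebraic constructions the induced matchings naturally produced have size $o(n)$ (covering only a vanishing fraction of the vertices), so dividing the $\Theta(n\,|S|)$ edges among matchings of this size already forces far more than $\Theta(|S|)$ colors, and one gets only $\chi_s'=n^{1-o(1)}$, superlinear in the average degree $|S|$. Matching the lower bound $2\Delta-1$ would instead require partitioning a dense graph into $O(\Delta)$ induced matchings each of linear size, a much stronger ``linear-sized induced-matching decomposition'' than the standard constructions supply. I therefore expect the crux to be the design of such a decomposition, presumably by replacing the vertex-transitive circulant structure, which is precisely what manufactures the offending chords and keeps the matchings small, with a construction whose edges admit a partition into $O(\Delta)$ induced matchings of linear size. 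This combinatorial core is where essentially new ideas, beyond the greedy and degeneracy method of the present paper, will be needed.
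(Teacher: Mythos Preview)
The statement is Conjecture~\ref{conjecture_bezograniczen}, which the paper does not prove; it is posed as an open problem, with the authors noting only that the Alon--Moitra--Sudakov construction yields the weaker bound $d(n)^{1+\epsilon}$ and explicitly ``do[es] not imply anything for graphs of strong chromatic index at most $cd(n)$.'' There is thus no proof in the paper against which to compare your attempt.

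Your proposal is not a proof either, and you say so yourself. The reduction to a single nearly regular family $\{H_t\}$ with $\Delta(H_t)\to\infty$ and $\chi_s'(H_t)\le C\,\Delta(H_t)$ is sound, and your analysis of the Sidon bipartite circulant is correct: the $N$ sum-classes are induced matchings of size $|S|=\Theta(\sqrt{N})$, so that decomposition uses $\Theta(N)=\Theta(\Delta^2)$ colors rather than $\Theta(\Delta)$. But having diagnosed this, you stop and concede that obtaining $O(\Delta)$ linear-sized induced matchings ``is where essentially new ideas \dots\ will be needed.'' That missing construction \emph{is} the entire content of the conjecture; everything preceding it in your write-up is plumbing plus a worked negative example. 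So the gap is not a technical slip but the absence of the main idea: you have restated the problem rather than solved it.

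One side remark. As you observe, $\chi_s'(G)\ge\Delta(G)\ge\bar d(G)$ for every graph $G$ with an edge, so the conjecture read literally is impossible for $c<1$; the surrounding discussion in the paper makes clear that the intended assertion is the nonexistence of a superlinear lower bound $\chi_s'\ge\omega(\bar d)$. Your reduction correctly targets that reading, but the target remains unhit.
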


Surprisingly, a slightly weaker assertion holds even for very dense graphs.
Alon, Moitra, and Sudakov \cite{Alonetal} construct graphs of average degree 
$n-o(n)$ that have strong chromatic index of order $O(n^{1+o(1)})$. This
implies that an analog of Conjecture \ref{conjecture_bezograniczen}, where
we replace bound on strong chromatic index with $d(n)^{1+\epsilon }$, is
true even for $d(n)=n-o(n)$. However, their results do not imply anything
for graphs of strong chromatic index at most $cd(n)$.

Knowing that there are graphs of average degree $d(n)$ and chromatic index
at most $d(n)^{1+\epsilon }$, a natural continuation of our research is to
find a characterization of those graphs that would allow us to strengthen
the upper bound given by Theorem 2 and, in a wider perspective, reveal some
new properties of this intriguing chromatic parameter.

\end{document}